\theoremstyle{plain}
 \newtheorem{thm}{\textbf{Theorem}}[section]
 \newtheorem{lem}{\textbf{Lemma}}[section]
\theoremstyle{definition}
\theoremstyle{remark}
 \newtheorem{rem}{\textbf{Remark}}[section]
 \numberwithin{equation}{section}
\title{Normal Truncated Toeplitz Operators}
\subjclass[2010]{Primary 47}
\author[Chu]{\bfseries Cheng Chu}
\address{
Department of Mathematics \\ % \hfill (Received 00 00 2010)\\
Vanderbilt University  \\ %\hfill (Revised  00 00 2010)\\
Nashville, Tennessee \\
USA}
\email{cheng.chu@vanderbilt.edu}
\begin{document}

%{\begin{flushleft}\baselineskip9pt\scriptsize
%PUBLICATIONS DE L'INSTITUT MATH\'EMATIQUE\newline
%Nouvelle s\'erie, tome 87(101) (2010), od--do \hfill DOI:
%\end{flushleft}}
\vspace{18mm}
\setcounter{page}{1}
\thispagestyle{empty}

\begin{abstract}
The characterization of normal truncated Toepltiz operators is first given by Chalendar and Timotin. We give an elementary proof of their result without using the algebraic properties of truncated Toeplitz operators.
\end{abstract}

\maketitle

\section{Introduction}
Let $\DD$ be the open unit disk in the complex plane. Let $L^2$ denote the Lebesgue space of square integrable functions on the unit circle $\partial\DD$. The Hardy space $H^2$ is the subspace of analytic functions on $\DD$ whose Taylor coefficients are square summable. Then it can also be identified with the subspace of $L^2$ of functions whose negative Fourier coefficients vanish. Let $P$ and $P^{\perp}$ be the orthogonal projections from $L^2$ to $H^2$ and $[H^2]^\perp$, respectively. Here $[H^2]^{\perp}$ is the orthogonal complement of $H^2$ in $L^2$.  For $f\in L^\infty$, the space of essentially bounded Lebesgue measurable functions on $\partial\DD$, the Toeplitz operator $T_f$ with symbol $f\in L^\infty$ is defined by $$T_fh=P(fh),$$  for $h\in H^2$. 

An analytic function $\Gt$ is called an inner function if $|\Gt|=1$ a.e. on $\TT$. For each non-constant inner function $\Gt$, the so-called model space is $$K_\Gt=H^2\ominus \Gt H^2.$$ It is a reproducing kernel Hilbert space with reproducing kernels $$k_w^{\Gt}(z)=\frac{1-\overline{\Gt(w)}\Gt(z)}{1-\bar{w}z}.$$

Let $P_\Gt$ denote the orthogonal projection from $L^2$ onto $K_\Gt$,
\beq\label{Pt}
P_{\Gt}f=P f-\Gt P(\bar{\Gt}f).
\eeq
For $\varphi\in L^2$, the truncated Toeplitz operator $A_\phi$ is defined by $$A^{\Gt}_\varphi f=P_\Gt (\varphi f),$$ on the dense subset $K_\Gt \cap H^{\infty}$ of $K_\Gt$. In particular, $K_\Gt \cap H^{\infty}$ contains all reproducing kernels $k_w^{\Gt}$. The operator $A^{\Gt}_\varphi$ may be extended to a bounded operator on $K_\Gt$ even for unbounded symbols $\Gvp$. The symbol $\Gvp$ is never unique and it is proved in \cite{sar07} that $$A^{\Gt}_\varphi=0$$ if and only if 
$$
\Gvp\in \Gt H^2+\ol{\Gt H^2}.
$$
If $\Gt(0)=0$, then $A^{\Gt}_\varphi$ has a unique symbol $$\Gvp\in K_\Gt+\ol{K_\Gt}.$$ The set of all bounded truncated Toeplitz operators is denoted by $\cT_\Gt$.

Recall that a bounded operator $T$ on a Hilbert space $\cH$ is normal if $T^*T=TT^*.$ The characterization of normal truncated Toepltiz operators is first given by Chalendar and Timotin using the algebraic properties of truncated Toeplitz operators obtained by Sarason \cite{sar07} and Sedlock \cite{sed11}. 
\begin{thm}\label{ct}\cite{ct14}*{Theorem 6.2}
Let $\Gt$ be a non-constant inner function vanishing at $0$. Then $A^\Gt_\Gvp$ is normal if and only if one of the following holds
\begin{enumerate}
\item $A^\Gt_\Gvp$ belongs to $\mcB_\Gt^\Ga$, for some unimodular constant $\Ga$.
\item $A^\Gt_\Gvp$ is a linear combination of a self-adjoint truncated Toeplitz operator and the identity. 
\end{enumerate}
\end{thm}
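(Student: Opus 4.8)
The plan is to reduce normality of $A^\Gt_\Gvp$ to a commutator identity involving only the two "halves" of the symbol, and then to solve that identity. Since $\Gt(0)=0$, I first write the symbol uniquely as $\Gvp=\psi+\ol\chi$ with $\psi,\chi\in K_\Gt$, so that $A_\Gvp=A_\psi+A_{\ol\chi}$ and, since $(A_\eta)^*=A_{\ol\eta}$, the adjoint is $A_\Gvp^*=A_{\ol\psi}+A_\chi$. The only structural input I need is the elementary fact that analytic truncated Toeplitz operators multiply, $A_aA_b=A_{ab}$ for $a,b\in K_\Gt$ (because $M_a$ leaves $\Gt H^2$ invariant, so $P_\Gt M_a(I-P_\Gt)M_bP_\Gt=0$), and likewise for co-analytic symbols. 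Expanding $A_\Gvp^*A_\Gvp-A_\Gvp A_\Gvp^*$ into four products, the pure-analytic cross term $A_\chi A_\psi$ cancels against $A_\psi A_\chi$ and the pure-co-analytic term against itself, leaving
\[
[A_\Gvp^*,A_\Gvp]=[A_\psi^*,A_\psi]-[A_\chi^*,A_\chi].
\]
Hence $A_\Gvp$ is normal if and only if $[A_\psi^*,A_\psi]=[A_\chi^*,A_\chi]$, an equation purely between the self-commutators of the analytic operators $A_\psi$ and $A_\chi$. This is the engine of the argument and is "elementary" in the sense that it uses no Sedlock theory.

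Next I would compute the self-commutator of a single analytic TTO. On $H^2=K_\Gt\oplus\Gt H^2$ the analytic Toeplitz operator $T_\psi$ is lower-triangular with corner $A_\psi$ and off-corner $B_\psi f:=\psi f-A_\psi f\in\Gt H^2$; comparing the $K_\Gt$-corners in $T_\psi^*T_\psi=T_{\ol\psi}T_\psi=T_{|\psi|^2}$ gives $A_\psi^*A_\psi=A_{|\psi|^2}-N_\psi$, where $N_\psi:=B_\psi^*B_\psi\ge 0$. Using the conjugation $Cf=\bar z\,\Gt\,\bar f$ on $K_\Gt$, which satisfies $CA_\psi C=A_\psi^*$ and therefore $C(A_{\ol\psi}A_\psi)C=A_\psi A_{\ol\psi}$ and $CA_{|\psi|^2}C=A_{|\psi|^2}$, one gets symmetrically $A_\psi A_\psi^*=A_{|\psi|^2}-CN_\psi C$. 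Subtracting yields the clean formula $[A_\psi^*,A_\psi]=CN_\psi C-N_\psi$, so the normality condition becomes the single operator equation
\[
C(N_\psi-N_\chi)C=N_\psi-N_\chi,
\]
that is, the self-adjoint operator $N_\psi-N_\chi$ is $C$-real.

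The last, and hardest, step is to solve this equation and match the solutions to alternatives (1) and (2); this is where I expect the main obstacle, because $N_\psi,N_\chi$ are genuine positive operators of arbitrarily high rank in general (already for $\Gt=z^n$). My plan is to use the quadratic-form version: $A_\Gvp$ is normal iff $\|B_\psi f\|^2-\|B_\chi f\|^2=\|B_\psi Cf\|^2-\|B_\chi Cf\|^2$ for all $f\in K_\Gt$, where a short computation using $\bar\Gt\Gt=1$ shows $\|B_\psi Cf\|=\|P(\bar z\,\psi\,\bar f)\|$ (the inner factor $\Gt$ drops out and the operator becomes conjugate-linear in $f$). Testing at $f=k_0^\Gt=1$ already forces the balance $\|\psi-\psi(0)\|=\|\chi-\chi(0)\|$; running the comparison over all $f$ should force the modulus and phase data of $\chi$ to align with those of $\psi$ in exactly one of two rigid ways: (a) $\chi=\omega\psi+c$ for a unimodular $\omega$ and a constant $c$, or (b) $\chi$ is tied to $\psi$ through the conjugation $C$ up to a single unimodular factor $\Ga$. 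In case (a) the symbol is $\psi+\ol{\omega\psi}+\bar c$, which exhibits $A_\Gvp=\beta A_s+\bar c\,I$ with $A_s$ the self-adjoint TTO of symbol $\bar\beta\psi+\ol{\bar\beta\psi}$ and $\beta^2=\bar\omega$, i.e. alternative (2); in case (b) the relation is precisely the symbolic description of the Sedlock class $\mcB_\Gt^\Ga$ with $|\Ga|=1$, i.e. alternative (1). The finite-dimensional model $\Gt=z^n$, where every TTO is a Toeplitz matrix and the statement is exactly the classification of normal Toeplitz matrices, is the guiding picture for this dichotomy, and I would solve that case explicitly first.

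Finally I would record the two reverse implications for completeness, which are immediate: in case (2) a complex-linear combination of a self-adjoint operator and the identity is trivially normal, and in case (1) every member of $\mcB_\Gt^\Ga$ with $|\Ga|=1$ is a function of the unitary (Clark) generator of $\mcB_\Gt^\Ga$ and is therefore normal. Combined with the forward analysis above, this closes the equivalence.
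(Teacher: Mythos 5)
Your reduction is sound: the multiplicativity $A_aA_b=A_{ab}$ for analytic symbols does make the cross terms cancel, and the resulting identity $[A_\Gvp^*,A_\Gvp]=[A_\psi^*,A_\psi]-[A_\chi^*,A_\chi]$ is, as a quadratic form, exactly the content of the paper's Lemma \ref{t1} (indeed $\la[A_\psi^*,A_\psi]u,u\ra=||P^{\perp}(\bar{\Gt}\psi u)||^2-||P(\bar{\psi}u)||^2$), and your conjugation identity $||B_\psi Cf||=||P(\bar z\,\psi\,\bar f)||$ is a compact analogue of the paper's Hankel-type Lemma \ref{Han}. But the proof stops exactly where the theorem begins: the sentence ``running the comparison over all $f$ \emph{should force} the modulus and phase data of $\chi$ to align with those of $\psi$ in exactly one of two rigid ways'' asserts the entire conclusion without an argument. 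Concretely, three things are missing. (i) The boundary identity $|\psi|=|\chi|$ a.e.\ on $\TT$: the paper obtains this (Lemma \ref{=1}) by testing at $u=1$ \emph{and} at $u=k_w^{\Gt}+1$ for all $w\in\DD$ and recognizing a Poisson integral of $|\Gvp_j|^2$; your test at $f=1$ yields only the norm balance $||\psi-\psi(0)||=||\chi-\chi(0)||$, which is far weaker. (ii) An enlargement of the test-function family: your identity is quantified over $f\in K_\Gt$ only, and the Cauchy kernels $k_w$ one needs to insert to get pointwise information are in general \emph{not} in $K_\Gt$; the paper's Lemma \ref{Han} together with the surjectivity of $V_\Gt U:H^2\to K_\Gt$ exists precisely to convert the condition into one over all $f\in H^\infty$ before evaluating at $k_w$, and your scheme has no substitute for this step. (iii) The dichotomy itself: even granted the pointwise statement, the paper extracts it by a nontrivial trick --- multiplying the identity at $k_w$ by $|\Gvp_2(w)|^2$ and using $\Gvp_1(w)(\Gt\bar{\Gvp_1})(w)=\Gvp_2(w)(\Gt\bar{\Gvp_2})(w)$ to factor it as $(|\Gvp_1(w)|^2-|\Gvp_2(w)|^2)(|(\Gt\bar{\Gvp_1})(w)|^2-|\Gvp_2(w)|^2)=0$ --- and must then globalize: the alternative holds \emph{for each} $w$ separately, and one needs a real-analyticity argument to conclude that one factor vanishes identically, producing a single unimodular $\Ga$. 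Your proposed fallback of first solving $\Gt=z^n$ (normal Toeplitz matrices) is not carried out, and no mechanism is offered for passing from $z^n$ to general inner $\Gt$.

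Two smaller points. Your sufficiency argument for alternative (1) invokes Sedlock's theorem that $\mcB_\Gt^\Ga$ with $|\Ga|=1$ is generated by a Clark unitary; that is correct but it is precisely the algebraic machinery this paper set out to avoid, and it is unnecessary: once one knows (paper's final Remark) that membership in $\mcB_\Gt^\Ga$ amounts to the symbol relation $\Gt\bar{\Gvp_2}=\Ga\Gvp_1$, sufficiency drops out of Lemma \ref{t1} term by term, since then $\bar{\Gt}\Gvp_2 u=\Ga\bar{\Gvp_1}u$ and $\bar{\Gvp_2}u=\bar{\Ga}\bar{\Gt}\Gvp_1 u$ and only $||\bar{\Gt}\Gvp_1 u||=||\bar{\Gvp_1}u||$ is needed. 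Relatedly, in your case (b) you never write down the relation tying $\chi$ to $\psi$ (it should be $\chi=\Ga\Gt\bar\psi$ modulo constants, as in Theorem \ref{main}) nor verify that it coincides with the Sedlock condition, which is required to conclude alternative (1) as stated.
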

Here $\mcB_\Gt^\Ga$ is a class of truncated Toeplitz operators introduced in \cite{sed11}.
In this note, we give an elementary proof of their result.

\section{Proof of the Main Result}
In this section we offer a proof of our characterization of normal truncated Toepltiz operators $A^{\Gt}_{\Gvp}$. We begin with some reduction. 
Notice that for any constant $C$, $A^{\Gt}_{\Gvp+C}=A^{\Gt}_{\Gvp}+CI, $
which implies $A^{\Gt}_{\Gvp}$ is normal if and only if $A^{\Gt}_{\Gvp+C}$ is normal. Thus we may assume, without losing of generality, that $\Gvp(0)=0$.

For $a\in\DD$, let $u_a$ be the M{\"o}bius transform
$$
u_a(z)=\frac{z-a}{1-\ba z}.
$$
The Crofoot transform is the unitary operator $J: K_\Gt\to K_{ u_a\circ\Gt}$ defined by
$$
J(f)=\frac{\sqrt{1-|a|^2}}{1-\ba \Gt}f.
$$
It is proved in \cite{sar07} that 
$$
J\cT_\Gt J^*=\cT_{u_a\circ\Gt}.
$$
Taking $a=\Gt(0)$, we see that it is sufficient to consider the normal truncated Toeplitz operators for $\Gt(0)=0$.
In this case, constant functions are in $K_{\Gt}$. Write $\Gvp=\Gvp_1+\overline{\Gvp_2}$, where $\Gvp_1, \Gvp_2$ are in $K_{\Gt}$. We may also assume $\Gvp_1(0)=\Gvp_2(0)=0$.

It is easy to see that $$(A^{\Gt}_{\Gvp})^*=A^{\Gt}_{\bar{\Gvp}}.$$ Our approach to characterizing normal truncated Toeplitz operators starts with a computation of $$||A^{\Gt}_{\Gvp}u||^2-||(A^{\Gt}_{\Gvp})^*u||^2.$$
\begin{lem}\label{t1}
Let $\Gt$ be a non-constant inner function. Suppose $$\Gvp=\Gvp_1+\overline{\Gvp_2},$$ where $\Gvp_1, \Gvp_2$ are in $K_{\Gt}$. Then for every $u\in K_\Gt\cap H^{\infty}$,
\begin{align*}\label{t}
&||A^{\Gt}_{\Gvp}u||^2-||(A^{\Gt}_{\Gvp})^*u||^2\\
=&||P^{\perp}(\bar{\Gt}\Gvp_1 u)||^2-||P(\bar{\Gvp_1}u) ||^2-( ||P^{\perp}(\bar{\Gt}\Gvp_2 u)||^2-||P(\bar{\Gvp_2}u) ||^2).
\end{align*}
\end{lem}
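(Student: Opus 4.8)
The plan is to compute $\|A^\Gt_\Gvp u\|^2$ and $\|(A^\Gt_\Gvp)^* u\|^2$ directly from the definitions, expand using $\Gvp = \Gvp_1 + \overline{\Gvp_2}$, and watch for cancellation. First I would write $A^\Gt_\Gvp u = P_\Gt(\Gvp u) = P_\Gt(\Gvp_1 u) + P_\Gt(\overline{\Gvp_2} u)$. Since $\Gvp_1 \in K_\Gt \subset H^2$ and $u \in K_\Gt \cap H^\infty$, the product $\Gvp_1 u$ is in $H^2$, so $P_\Gt(\Gvp_1 u) = P(\Gvp_1 u) - \Gt P(\bar\Gt \Gvp_1 u) = \Gvp_1 u - \Gt P(\bar\Gt \Gvp_1 u)$ using \eqref{Pt} and the fact that $\Gvp_1 u \in H^2$. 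For the co-analytic part, $\overline{\Gvp_2} u$ need not lie in $H^2$; here $P_\Gt(\overline{\Gvp_2} u) = P(\overline{\Gvp_2} u) - \Gt P(\bar\Gt\, \overline{\Gvp_2} u)$, and since $\bar\Gt\,\overline{\Gvp_2} = \overline{\Gt \Gvp_2}$ is co-analytic (as $\Gt\Gvp_2 \in H^2$ and actually vanishes at $0$), the term $P(\overline{\Gt\Gvp_2}\, u)$ simplifies — I expect it to reduce to $\overline{\Gt(0)\Gvp_2(0)}\,u$ or similar, and in the normalized case this may vanish outright. The upshot is an expression for $A^\Gt_\Gvp u$ as an explicit sum of an $H^2$ piece and a $\Gt H^2$-type piece.

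Next I would do the same for $(A^\Gt_\Gvp)^* u = A^\Gt_{\bar\Gvp} u$, where $\bar\Gvp = \overline{\Gvp_1} + \Gvp_2$, so the roles of $\Gvp_1$ and $\Gvp_2$ are swapped. This immediately explains the antisymmetric structure of the claimed identity: whatever expression one gets for $\|A^\Gt_\Gvp u\|^2$ in terms of $(\Gvp_1, \Gvp_2)$, the quantity $\|(A^\Gt_\Gvp)^* u\|^2$ is the same expression with $\Gvp_1 \leftrightarrow \Gvp_2$, and the difference is a sum of terms each antisymmetric under that swap. So it suffices to show that $\|A^\Gt_\Gvp u\|^2$ equals $\|P^\perp(\bar\Gt \Gvp_1 u)\|^2 - \|P(\overline{\Gvp_1} u)\|^2 + (\text{terms symmetric in }\Gvp_1, \Gvp_2)$, or even just to track the $\Gvp_1$-only and $\Gvp_2$-only contributions plus cross terms and argue the cross terms are symmetric.

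Then comes the bookkeeping: expand $\|A^\Gt_\Gvp u\|^2 = \langle A^\Gt_\Gvp u, A^\Gt_\Gvp u\rangle$ into four inner products coming from the two summands, and likewise for the adjoint. I would use orthogonality of $H^2$ and $[H^2]^\perp$, of $K_\Gt$ and $\Gt H^2$, and the isometry of multiplication by the inner function $\Gt$ on $L^2$ to rewrite $\|\Gt P(\cdots)\| = \|P(\cdots)\|$ and to convert terms like $\|P(\bar\Gt \Gvp_1 u)\|^2$ into $\|\Gvp_1 u\|^2 - \|P^\perp(\bar\Gt\Gvp_1 u)\|^2$ via $\|\Gvp_1 u\|^2 = \|\bar\Gt \Gvp_1 u\|^2 = \|P(\bar\Gt\Gvp_1 u)\|^2 + \|P^\perp(\bar\Gt\Gvp_1 u)\|^2$. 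Similarly $\|P(\overline{\Gvp_1}u)\|^2 = \|\Gvp_1 u\|^2 - \|P^\perp(\overline{\Gvp_1} u)\|^2$ — but $P^\perp(\overline{\Gvp_1}u)$ is essentially $\overline{\Gvp_1 u}$ modulo a constant, matching up with analogous $\Gvp_2$ terms so that the $\|\Gvp_j u\|^2$ and $\|P^\perp(\overline{\Gvp_j}u)\|^2$ contributions cancel in the difference, leaving exactly the four surviving terms. The main obstacle, and the part requiring genuine care rather than formula-pushing, is handling the Crofoot-normalization assumptions $\Gt(0)=0$ and $\Gvp_1(0)=\Gvp_2(0)=0$ correctly: several of the simplifications above (e.g. $P(\overline{\Gt\Gvp_j}\,u) = 0$, and that constants lie in $K_\Gt$ so that $P^\perp(\overline{\Gvp_j}u)$ and $P(\overline{\Gvp_j}u)$ split cleanly) depend on exactly which function vanishes at $0$, and it is easy to drop a constant-rank term. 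I would verify each such reduction against \eqref{Pt} before combining.
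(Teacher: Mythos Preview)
Your approach is essentially the paper's: compute $A^\Gt_\Gvp u$ explicitly as $\Gvp_1 u - \Gt P(\bar\Gt\Gvp_1 u) + P(\bar{\Gvp_2}u)$, expand $\|A^\Gt_\Gvp u\|^2$, observe that the cross term reduces to $2\operatorname{Re}\langle \Gvp_1 u,\bar{\Gvp_2}u\rangle$ (which is symmetric in $\Gvp_1,\Gvp_2$), and then swap $\Gvp_1\leftrightarrow\Gvp_2$ to get $\|(A^\Gt_\Gvp)^*u\|^2$ and subtract.

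One correction, though: you repeatedly invoke the normalizations $\Gt(0)=0$ and $\Gvp_j(0)=0$, but the lemma does \emph{not} assume them, and the paper's proof does not use them. In particular, your justification that $P(\bar\Gt\,\bar{\Gvp_2}\,u)=0$ because ``$\Gt\Gvp_2$ vanishes at $0$'' is aimed at the wrong hypothesis and would not suffice anyway (e.g.\ $\overline{z}\cdot z = 1$ has nonzero $P$-part). The correct reason is that $u\in K_\Gt$ forces $\bar\Gt u\in\overline{zH^2}\cap L^\infty$, so $\bar\Gt\bar{\Gvp_2}u = \bar{\Gvp_2}\cdot(\bar\Gt u)\in \overline{H^2}\cdot(\overline{zH^2}\cap L^\infty)\subset\overline{zH^2}=[H^2]^\perp$. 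The same observation is what makes the cross-term computation work. Once you use $u\in K_\Gt$ rather than the vanishing conditions, the argument goes through cleanly in the generality the lemma actually claims.
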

\begin{proof}
By \eqref{Pt}, we have for every $u\in K_\Gt\cap H^\infty$
\begin{align*}
A^{\Gt}_{\Gvp}u &=P_{\Gt}(\Gvp u)\\
&=P(\Gvp u)-\Gt P(\bar{\Gt}\Gvp u)\\
&=\Gvp_1 u+P(\bar{\Gvp_2}u)-\Gt P(\bar{\Gt}\Gvp_1 u+\bar{\Gt}\bar{\Gvp_2} u )\\
&=\Gvp_1 u-\Gt P(\bar{\Gt}\Gvp_1 u) +P(\bar{\Gvp_2}u).
\end{align*}
Then
\begin{align*}
||A^{\Gt}_{\Gvp}u||^2 &=||(\Gvp_1 u-\Gt P(\bar{\Gt}\Gvp_1 u))+P(\bar{\Gvp_2}u) ||^2\\
&=||\Gvp_1 u-\Gt P(\bar{\Gt}\Gvp_1 u)||^2+||P(\bar{\Gvp_2}u) ||^2+2\, \m{Re} \langle \Gvp_1 u-\Gt P(\bar{\Gt}\Gvp_1 u), P(\bar{\Gvp_2}u) \rangle\\
&=||\bar{\Gt}\Gvp_1 u||^2-||P(\bar{\Gt}\Gvp_1 u)||^2+||P(\bar{\Gvp_2}u) ||^2+2\, \m{Re} \langle \Gvp_1 u-\Gt P(\bar{\Gt}\Gvp_1 u), P(\bar{\Gvp_2}u) \rangle\\
&=||P^{\perp}(\bar{\Gt}\Gvp_1 u)||^2+||P(\bar{\Gvp_2}u) ||^2+2\, \m{Re} \langle \Gvp_1 u-\Gt P(\bar{\Gt}\Gvp_1 u), P(\bar{\Gvp_2}u) \rangle.
\end{align*}
And
\begin{align*}
\langle \Gvp_1 u-\Gt P(\bar{\Gt}\Gvp_1 u), P(\bar{\Gvp_2}u) \rangle &=\langle \Gvp_1 u, P(\bar{\Gvp_2}u) \rangle-\langle \Gt P(\bar{\Gt}\Gvp_1 u), P(\bar{\Gvp_2}u) \rangle \\
&=\langle \Gvp_1 u, \bar{\Gvp_2}u \rangle-\langle P(\bar{\Gt}\Gvp_1 u), \bar{\Gt}P(\bar{\Gvp_2}u) \rangle\\
&=\langle \Gvp_1 u, \bar{\Gvp_2}u \rangle-\langle P(\bar{\Gt}\Gvp_1 u), \bar{\Gt}u\bar{\Gvp_2}-\bar{\Gt}P^{\perp}(\bar{\Gvp_2}u) \rangle\\
&=\langle \Gvp_1 u, \bar{\Gvp_2} u \rangle.
\end{align*}
Thus
\beq\label{l1}
||A^{\Gt}_{\Gvp}u||^2=||P^{\perp}(\bar{\Gt}\Gvp_1 u)||^2+||P(\bar{\Gvp_2}u) ||^2+2\, \m{Re} \langle \Gvp_1 u, \bar{\Gvp_2} u \rangle.
\eeq
Similarly
\beq\label{l2}
||(A^{\Gt}_{\Gvp})^*u||^2=||A^{\Gt}_{\Gvp_2+\bar{\Gvp_1}} u||^2=||P^{\perp}(\bar{\Gt}\Gvp_2 u)||^2+||P(\bar{\Gvp_1}u) ||^2+2\, \m{Re} \langle \Gvp_2 u, \bar{\Gvp_1}u\rangle.
\eeq
Subtracting \eqref{l2} from \eqref{l1}, we get the desired identity.
\end{proof}

For $w\in \DD$, let $$k_w(z)=\frac{1}{1-\bw z}$$ be the reproducing kernel of $H^2$.

First we show that if $A^{\Gt}_{\Gvp}$ is normal then $\Gvp_1/\Gvp_2$ is a unimodular function.
\begin{lem}\label{=1}
Let $\Gt$ be a non-constant inner function vanishing at $0$. Suppose $\Gvp=\Gvp_1+\overline{\Gvp_2}$, where $\Gvp_1, \Gvp_2$ are in $K_{\Gt}$, and $\Gvp_1(0)=\Gvp_2 (0)=0$. If $A^{\Gt}_{\Gvp}$ is normal then $$|\Gvp_1|=|\Gvp_2|,$$ a.e. on $\TT$.
\end{lem}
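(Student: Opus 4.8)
The plan is to exploit Lemma \ref{t1} by testing the normality identity on the reproducing kernels of $H^2$. Since $A^{\Gt}_{\Gvp}$ is normal, $\|A^{\Gt}_{\Gvp}u\|^2 = \|(A^{\Gt}_{\Gvp})^*u\|^2$ for all $u \in K_\Gt \cap H^\infty$, so by Lemma \ref{t1} we get
\[
\|P^{\perp}(\bar{\Gt}\Gvp_1 u)\|^2-\|P(\bar{\Gvp_1}u)\|^2 = \|P^{\perp}(\bar{\Gt}\Gvp_2 u)\|^2-\|P(\bar{\Gvp_2}u)\|^2
\]
for every such $u$. First I would rewrite each term. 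For $\psi \in K_\Gt$ with $\psi(0)=0$ and $u \in K_\Gt \cap H^\infty$, note $\bar\psi u \in H^2 + \overline{H^2}$, and $\|P(\bar\psi u)\|^2 = \|\bar\psi u\|^2 - \|P^\perp(\bar\psi u)\|^2$; also since $\psi \in K_\Gt$ we have $\bar\Gt \psi \in \overline{H^2}$, in fact $\bar\Gt\psi \in \overline{zH^2}$ because $\psi \in K_\Gt$ means $\Gt\overline{H^2} \ni \bar\psi$... more usefully, $P^\perp(\bar\Gt\psi u)$ can be related to $P(\bar\psi u)$ via the conjugation $z \mapsto \bar\Gt z$ on... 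Actually the cleanest route: the quantity $\|P^\perp(\bar\Gt\psi u)\|^2 - \|P(\bar\psi u)\|^2$ should reduce, using $\|\bar\Gt\psi u\| = \|\psi u\|$ and $\|P^\perp(\bar\Gt\psi u)\|^2 = \|\bar\Gt\psi u\|^2 - \|P(\bar\Gt\psi u)\|^2$, to $\|\psi u\|^2 - \|P(\bar\Gt\psi u)\|^2 - \|\bar\psi u\|^2 + \|P^\perp(\bar\psi u)\|^2$, i.e. to $\|P^\perp(\bar\psi u)\|^2 - \|P(\bar\Gt\psi u)\|^2$ since $\|\psi u\| = \|\bar\psi u\|$. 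So the identity becomes
\[
\|P^{\perp}(\bar{\Gvp_1}u)\|^2 - \|P(\bar{\Gt}\Gvp_1 u)\|^2 = \|P^{\perp}(\bar{\Gvp_2}u)\|^2 - \|P(\bar{\Gt}\Gvp_2 u)\|^2 .
\]

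Next I would specialize $u = k_w^{\Gt}$ for $w \in \DD$ — or better, use that $k_w = k_w^\Gt$ up to the factor $1-\overline{\Gt(w)}\Gt$, and since everything lives in $K_\Gt$ one can equivalently pair against $k_w$, the $H^2$ kernel. The point of using kernels is that $\langle P(g), k_w \rangle = g(w)$ for $g \in H^2$ and more generally $\langle g, k_w\rangle$ extracts point values, turning the $L^2$-norm identities into pointwise statements about the functions $\Gvp_1, \Gvp_2, \Gt$. Concretely, expanding each of the four norms above when $u$ ranges over kernels (and using polarization to recover inner products) should produce an identity of the form $F_1(w,\zeta) = F_2(w,\zeta)$ where $F_j$ is built from $\Gvp_j$; comparing the structure of these as $w \to \partial\DD$ (or reading off boundary behaviour), and using $\Gvp_1(0) = \Gvp_2(0) = 0$ and $\Gt(0)=0$, forces $|\Gvp_1|^2 = |\Gvp_2|^2$ as boundary functions. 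An alternative, possibly smoother, execution: recognize the left side of the reduced identity as $\|H_{\bar{\Gvp_1}} u\|^2 - \|H_{\bar\Gt\Gvp_1}u\|^2$ in terms of Hankel-type operators restricted to $K_\Gt$, or directly as a quadratic form $\langle (T_{|\Gvp_1|^2} - \text{something})u, u\rangle$; in any case one wants to show the difference of the two quadratic forms vanishes on a dense set and identify it with multiplication by $|\Gvp_1|^2 - |\Gvp_2|^2$.

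The main obstacle I anticipate is the last identification step: going from "a certain quadratic form vanishes on $K_\Gt \cap H^\infty$" to "$|\Gvp_1| = |\Gvp_2|$ a.e." One has to be careful that the model space $K_\Gt$ is not all of $H^2$, so vanishing of $\langle M u, u\rangle$ for $u \in K_\Gt\cap H^\infty$ does not immediately give $M = 0$; the resolution is that the functions $P^\perp(\bar\Gvp_j u)$ and $P(\bar\Gt\Gvp_j u)$ have enough range as $u$ varies, or that the kernel functions $k_w^\Gt$ reproduce point values densely enough, to conclude the pointwise equality of $|\Gvp_1|$ and $|\Gvp_2|$ on $\TT$. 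Managing the $\Gt$-twists (the factors $\bar\Gt$) and keeping track of which spaces each piece lands in — $H^2$, $[H^2]^\perp$, or neither — is where the bookkeeping will be most delicate, and I expect the normalization $\Gvp_1(0)=\Gvp_2(0)=0$ to be used precisely to kill the constant terms that would otherwise obstruct this.
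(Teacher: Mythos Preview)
Your overall strategy --- apply Lemma~\ref{t1} and test on reproducing kernels --- is exactly the paper's, and your algebraic rewriting of the identity is correct (though the paper does not use it). The gap is precisely where you say it is: you never carry out the ``last identification step,'' and plugging $u=k_w^{\Gt}$ alone into the identity does \emph{not} cleanly isolate $|\Gvp_1|^2-|\Gvp_2|^2$. A direct computation gives something of the shape
\[
\|k_w^{\Gt}\|^2\bigl(\widehat{|\Gvp_1|^2}(w)-\widehat{|\Gvp_2|^2}(w)\bigr)
= \|k_w\|^2\bigl(G_1(w)-G_2(w)\bigr),
\]
with $G_j$ a combination of $|\Gvp_j(w)|^2$, $|(\Gt\bar\Gvp_j)(w)|^2$, and a $\Gt(w)$-cross term, and it is not clear how to conclude from this without further input.

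The paper's resolution uses two specific test functions. First $u=1$: since $\Gvp_j(0)=0$ one has $P(\bar\Gvp_j)=0$, and since $\Gvp_j\in K_\Gt$ one has $P^\perp(\bar\Gt\Gvp_j)=\bar\Gt\Gvp_j$, so the identity collapses to $\|\Gvp_1\|=\|\Gvp_2\|$. Second $u=k_w^{\Gt}+1$: expanding and subtracting the identities already known for $u=k_w^{\Gt}$ and $u=1$, together with $\|\Gvp_1\|=\|\Gvp_2\|$, leaves only the cross terms
\[
\m{Re}\,\langle P^\perp(\bar\Gt\Gvp_1 k_w^{\Gt}),\bar\Gt\Gvp_1\rangle
= \m{Re}\,\langle P^\perp(\bar\Gt\Gvp_2 k_w^{\Gt}),\bar\Gt\Gvp_2\rangle.
\]
One checks $P^\perp(\bar\Gt\Gvp_j k_w^{\Gt})=P^\perp(\bar\Gt\Gvp_j k_w)$, so each side equals $\m{Re}\int_{\TT}|\Gvp_j|^2 k_w = \tfrac12\bigl(\|\Gvp_j\|^2+\widehat{|\Gvp_j|^2}(w)\bigr)$, using that $\m{Re}\,\frac{1}{1-\bar w e^{it}}=\tfrac12\bigl(1+P_w(e^{it})\bigr)$ with $P_w$ the Poisson kernel. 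Hence $\widehat{|\Gvp_1|^2}=\widehat{|\Gvp_2|^2}$ on $\DD$, and nontangential limits give $|\Gvp_1|=|\Gvp_2|$ a.e.\ on $\TT$. Your polarization remark actually points here --- polarizing against $k_0^{\Gt}=1$ is exactly the choice $u=k_w^{\Gt}+1$ --- but the plan does not commit to it or perform the Poisson-kernel computation that makes the argument go through.
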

\begin{proof}
By Lemma \ref{t1}, $A^{\Gt}_{\Gvp}$ is normal implies  
\beq\label{=}
||P^{\perp}(\bar{\Gt}\Gvp_1 u)||^2-||P(\bar{\Gvp_1}u) ||^2=||P^{\perp}(\bar{\Gt}\Gvp_2 u)||^2-||P(\bar{\Gvp_2}u) ||^2,
\eeq
for every $u\in K_\Gt\cap H^{\infty}$. Take $u=1$, we get
$$
||P^{\perp}(\bar{\Gt}\Gvp_1)||^2-||P(\bar{\Gvp_1}) ||^2= ||P^{\perp}(\bar{\Gt}\Gvp_1)||^2-||P(\bar{\Gvp_2}) ||^2.
$$
Since \beq\label{1} P^{\perp}(\bar{\Gt}\Gvp_j)=\bar{\Gt}\Gvp_j,\eeq
and \beq\label{2}P(\bar{\Gvp_j})=0,\eeq
we have \beq\label{||}||\Gvp_1||=||\Gvp_2||.\eeq

Next we consider the reproducing kernels of $K_\Gt$:
$$k_w^{\Gt}(z)=\frac{1-\overline{\Gt(w)}\Gt(z)}{1-\bar{w}z},$$
and take $u=u_w =k_w^{\Gt}+1$ in \eqref{=}.
Using \eqref{1} and \eqref{2}, we have
\begin{align*}
||P^{\perp}(\bar{\Gt}\Gvp_j u_w)||^2=&||P^{\perp}(\bar{\Gt}\Gvp_j k_w^{\Gt})||^2+||P^{\perp}(\bar{\Gt}\Gvp_j)||^2+2\m{Re}\,\langle P^{\perp}(\bar{\Gt}\Gvp_j k_w^{\Gt}), P^{\perp}(\bar{\Gt}\Gvp_j ) \rangle\\
=&||P^{\perp}(\bar{\Gt}\Gvp_j k_w^{\Gt})||^2+||\bar{\Gt}\Gvp_j||^2+2\m{Re}\,\langle P^{\perp}(\bar{\Gt}\Gvp_j k_w^{\Gt}), \bar{\Gt}\Gvp_j  \rangle,
\end{align*}
and
$$
||P(\bar{\Gvp_j}u_w)||^2=||P(\bar{\Gvp_j}k_w^{\Gt})||^2+||P(\bar{\Gvp_j})||^2+2\m{Re}\,\langle P(\bar{\Gvp_j}k_w^{\Gt}), P(\bar{\Gvp_j})\rangle=||P(\bar{\Gvp_j}k_w^{\Gt})||^2.
$$
This together with Lemma \ref{t1} and \eqref{||} implies
\beq\label{3}
\m{Re}\,\langle P^{\perp}(\bar{\Gt}\Gvp_1 k_w^{\Gt}), \bar{\Gt}\Gvp_1\rangle =\m{Re}\,\langle P^{\perp}(\bar{\Gt}\Gvp_2 k_w^{\Gt}), \bar{\Gt}\Gvp_2 \rangle.
\eeq
Since $$k_w^{\Gt}=(1-\overline{\Gt(w)}\Gt)k_w,$$ we get
$$
P^{\perp}(\bar{\Gt}\Gvp_j k_w^{\Gt})= P^{\perp}(\bar{\Gt}\Gvp_j(1-\ol{\Gt(w)}\Gt) k_w)=P^{\perp}(\bar{\Gt}\Gvp_jk_w)-\ol{\Gt(w)}P^{\perp}(\Gvp_j k_w)=P^{\perp}(\bar{\Gt}\Gvp_jk_w).
$$
Hence
\begin{align*}
&\m{Re}\,\langle P^{\perp}(\bar{\Gt}\Gvp_j k_w^{\Gt}), \bar{\Gt}\Gvp_j\rangle\\
=&\m{Re}\,\langle P^{\perp}(\bar{\Gt}\Gvp_jk_w), \bar{\Gt}\Gvp_j\rangle\\
=&\m{Re}\,\langle \bar{\Gt}\Gvp_jk_w, \bar{\Gt}\Gvp_j\rangle\\
=&\m{Re}\,\int_{0}^{2\pi}\frac{ |\Gvp_j(e^{it})|^2}{1-\bw e^{it}}\frac{dt}{2\pi}\\
=&\int_{0}^{2\pi} |\Gvp_j(e^{it})|^2 \Big(\m{Re}\,\frac{1}{1-\bw e^{it}}\Big)\frac{dt}{2\pi}\\
=&\frac{1}{2}\int_{0}^{2\pi} |\Gvp_j(e^{it})|^2 \Big(1+\m{Re}\,\frac{1+\bw e^{it}}{1-\bw e^{it}}\Big)\frac{dt}{2\pi}\\
=&\frac{1}{2}||\Gvp_j||^2+\frac{1}{2}\int_{0}^{2\pi} |\Gvp_j(e^{it})|^2 \Big(\m{Re}\,\frac{1+\bw e^{it}}{1-\bw e^{it}}\Big)\frac{dt}{2\pi}\\
=&\frac{1}{2}(||\Gvp_j||^2+\widehat{|\Gvp_j|^2}(w)).
\end{align*}
The last equality holds because $$\m{Re}\,\frac{1+\bw e^{it}}{1-\bw e^{it}}$$ is the Poisson kernel at $w$. Here $\widehat{|\Gvp_j|^2}$ is the harmonic extension of the function $|\Gvp_j|^2$.
It follows from \eqref{3} and \eqref{||} that
$$
\widehat{|\Gvp_1|^2}(w)=\widehat{|\Gvp_2|^2}(w).
$$
Let $w\to\Gz\in\TT$ nontangentially, we see that
$$
|\Gvp_1|=|\Gvp_2|,
$$
a.e. on $\TT$.

\end{proof}

Let $U$ is the unitary operator on $L^2$ defined by $$Uh(z)=\bz \tih(z),$$  where $\tih(z)=h(\bz)$. 
Let $V_\Gt$ be the operator $$V_\Gt h=P(\Gt h),$$ for $h\in L^2$.
Consider the decomposition 
$$
[H^2]^{\perp}=\bar{\Gt}K_\Gt\oplus \bar{\Gt}[H^2]^{\perp}.
$$
It is easy to check that $V_\Gt$ maps $\bar{\Gt}K_\Gt$ onto $K_\Gt$, and maps $\bar{\Gt}[H^2]^{\perp}$ to $0$. 
Thus $V_\Gt$ maps $[H^2]^\perp$ onto $K_\Gt$.
Since $U$ maps $H^2$ onto $[H^2]^\perp$, we see that 
$$V_\Gt U: H^2\to K_\Gt$$ is also onto.

We shall use the following identity.
\begin{lem}\label{Han}
Let $\Gt$ be an inner function and let $g$ be in $H^2$. Then for every function $f\in H^\infty$
$$||P(\bg V_\Gt U f)||=||P^\perp(\bar{\Gt} g f^*)||,$$
where $f^*(z)=\ol{f(\bz)}$.
\end{lem}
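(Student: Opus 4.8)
The plan is to establish the sharper, norm-free identity
$$
z\,P(\bg\, V_\Gt U f)=\ol{P^\perp(\bar{\Gt}\, g\, f^*)}\qquad\text{a.e. on }\TT ,
$$
from which the lemma is immediate: multiplication by the unimodular function $z$ and complex conjugation are both isometries of $L^2$, so taking $L^2$-norms on the two sides gives $||P(\bg\, V_\Gt U f)||=||P^\perp(\bar\Gt\, g\, f^*)||$. (As a by-product the left side, a priori only an element of $L^1$, is thereby seen to lie in $H^2$.)

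First I would record an elementary reflection identity: for every $k\in L^1$,
$$
\ol{P^\perp k}=z\,P(\bar z\,\bar k).
$$
Indeed, conjugation carries the Fourier coefficient of $k$ at $-n$ (for $n\ge 1$) to that of $\bar k$ at $n$, so $\ol{P^\perp k}$ is precisely the strictly-positive-frequency part of $\bar k$, i.e. $M_zPM_{\bar z}\bar k=z\,P(\bar z\,\bar k)$. Applying this to $k=\bar\Gt\, g\, f^*$, whose conjugate on $\TT$ is $\Gt\,\bg\,\ol{f^*}$, gives
$$
\ol{P^\perp(\bar\Gt\, g\, f^*)}=z\,P\bigl(\bar z\,\Gt\,\bg\,\ol{f^*}\bigr).
$$

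The remaining task is to identify the argument of $P$ above with $\bg\,V_\Gt U f$. Since $\ol{f^*}=\widetilde f$ on $\TT$ and $Uf=\bar z\,\widetilde f$, we have $\bar z\,\Gt\,\ol{f^*}=\Gt\,(\bar z\,\widetilde f)=\Gt\, Uf$, hence $\bar z\,\Gt\,\bg\,\ol{f^*}=\bg\cdot\Gt Uf$. Finally I would replace $\Gt Uf$ by its analytic part against the coanalytic multiplier $\bg$: decomposing $\Gt Uf=V_\Gt Uf+P^\perp(\Gt Uf)$, the function $\bg\,P^\perp(\Gt Uf)$ lies in $L^1$ (a product of two $L^2$ functions, since $g\in H^2$ and $\Gt Uf\in L^\infty$) and has only strictly negative Fourier coefficients, because for $n\ge 0$
$$
\langle \bg\,P^\perp(\Gt Uf),\,z^n\rangle=\langle P^\perp(\Gt Uf),\,g\,z^n\rangle=0 ,
$$
as $g\,z^n\in H^2$. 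Therefore $P(\bg\cdot\Gt Uf)=P(\bg\,V_\Gt Uf)$, and combining the three displays yields the asserted identity.

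I do not expect a serious obstacle: the statement is essentially a bookkeeping identity. The two points demanding care are getting the single power of $z$ in the reflection identity exactly right, and justifying the manipulations of $P$ and $P^\perp$ on products that are a priori only in $L^1$ (the identity itself then retroactively places $P(\bg\, V_\Gt U f)$ in $H^2$). A completely mechanical alternative is to expand $\Gt=\sum_k b_k z^k$, $f=\sum_j a_j z^j$, $g=\sum_i c_i z^i$ and verify that the coefficient of $z^q$ in $P(\bg\, V_\Gt U f)$ is the complex conjugate of the coefficient of $z^{-q-1}$ in $P^\perp(\bar\Gt\, g\, f^*)$ — both equal $\sum_{i,j\ge 0}\bar c_i\, a_j\, b_{q+i+j+1}$ — and then to sum the squared moduli over $q\ge 0$.
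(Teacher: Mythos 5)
Your proposal is correct and takes essentially the same route as the paper: your reflection identity $\overline{P^\perp k}=zP(\bar z\bar k)$ is exactly the paper's combination of $PU=UP^\perp$ with $(Ph)^*=P(h^*)$ written out on Fourier coefficients, and your pointwise identity $z\,P(\bg V_\Gt Uf)=\overline{P^\perp(\bar\Gt g f^*)}$ coincides with the paper's $P(\bg V_\Gt Uf)=(UP^\perp(\bar\Gt g f^*))^*$, after which both arguments finish by taking $L^2$-norms. The only real difference is that you justify explicitly the step $P(\bg P(\Gt Uf))=P(\bg\,\Gt Uf)$ (and the $L^1$ bookkeeping behind it), which the paper passes over silently.
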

\begin{proof}
Notice that for all $h\in L^2$, we have $$(Uh)^*=U(h^*)$$ and $$(Ph)^*=P(h^*).$$ Thus
\begin{align*}
P(\bg V_\Gt U f)&=P(\bg P(\Gt U f))=P(\bg \Gt U f)=P(\bz \Gt\bg \tif)\\
&=PU((\bar{\Gt}g)^*f)=P(U(\bar{\Gt}g f^*))^*\\
&=(PU(\bar{\Gt}g f^*))^*=(UP^\perp(\bar{\Gt}g f^*))^*.
\end{align*}
Here we used $PU=UP^\perp$ in the last equality.
Since $||h||=||h^*||,$ for all $h\in L^2$ and $U$ is an isometry, we get the desired identity.
\end{proof}

The following result is well-known (see e.g. \cite{tre15}*{Lemma 8}).
\begin{thm}\label{P}
If $f\in H^2$, then for every $w\in\DD$,
$$
P(\barf k_w) = \ol{f(w)} k_w.
$$
\end{thm}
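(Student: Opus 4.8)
The assertion is that $P(\bar{f}k_w)=\overline{f(w)}\,k_w$ for every $f\in H^2$ and $w\in\DD$. Since $w$ is fixed inside $\DD$, the kernel $k_w$ is bounded on $\partial\DD$, so $\bar{f}k_w\in L^2$ and $P(\bar{f}k_w)$ is a genuine element of $H^2$; the right-hand side lies in $H^2$ trivially. The plan is to prove the two sides equal by checking that they have the same inner product against every reproducing kernel $k_z$, $z\in\DD$, which is enough because the linear span of $\{k_z:z\in\DD\}$ is dense in $H^2$ (an $H^2$ function orthogonal to all $k_z$ vanishes identically).

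For the left-hand side, self-adjointness of $P$ together with $k_z\in H^2$ gives $\langle P(\bar{f}k_w),k_z\rangle=\langle\bar{f}k_w,k_z\rangle$. Writing this pairing as an integral over $\partial\DD$ and pulling the conjugate outside yields
$$\langle\bar{f}k_w,k_z\rangle=\int_{\partial\DD}\bar{f}\,k_w\,\overline{k_z}\,\frac{dt}{2\pi}=\overline{\int_{\partial\DD}f\,k_z\,\overline{k_w}\,\frac{dt}{2\pi}}=\overline{\langle fk_z,k_w\rangle}.$$
Now $fk_z\in H^2$, being the product of an $H^2$ function and a bounded analytic function, so the reproducing property of $k_w$ gives $\langle fk_z,k_w\rangle=(fk_z)(w)=f(w)k_z(w)=f(w)/(1-\bar{z}w)$. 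Conjugating, $\langle P(\bar{f}k_w),k_z\rangle=\overline{f(w)}/(1-z\bar{w})=\overline{f(w)}\,k_w(z)$. On the other hand, the reproducing property gives $\langle\overline{f(w)}k_w,k_z\rangle=\overline{f(w)}\,k_w(z)$ directly. Hence $P(\bar{f}k_w)-\overline{f(w)}k_w$ is orthogonal to every $k_z$ and therefore is $0$.

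I do not anticipate any real difficulty, since the statement is classical; the only points needing a word of justification are that $\bar{f}k_w$ actually lies in $L^2$ (so that $P$ is applicable) and that the $k_z$ span a dense subspace. If one prefers a completely hands-on argument avoiding the density step, one can instead compute on the circle directly: there $\bar{z}=1/z$, so $\overline{(z-w)}\,k_w(z)=\frac{1-\bar{w}z}{z}\cdot\frac{1}{1-\bar{w}z}=\bar{z}$; writing $h(z)=\frac{f(z)-f(w)}{z-w}$, which is analytic on $\DD$ and bounded on $\partial\DD$ by $(|f|+|f(w)|)/(1-|w|)$, hence in $H^2$, one gets $\bar{f}k_w=\overline{f(w)}\,k_w+\overline{zh}$, and since $zh\in zH^2$ the term $\overline{zh}$ lies in $\overline{zH^2}\subseteq[H^2]^\perp$ and is annihilated by $P$, which yields the identity at once.
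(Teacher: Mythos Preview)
Your first argument is correct and is the standard one: pairing both sides with $k_z$ and using the reproducing property plus density of the kernels in $H^2$ does the job cleanly. There is nothing to compare against in the paper itself, since the paper does not prove this statement; it simply records it as well known and cites \cite{tre15}*{Lemma 8}.

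One small comment on your alternative ``hands-on'' argument. The conclusion that $h(z)=\dfrac{f(z)-f(w)}{z-w}$ belongs to $H^2$ is true, but the justification you give---analytic on $\DD$ with boundary values dominated by $(|f|+|f(w)|)/(1-|w|)\in L^2$---is not sufficient on its own: an analytic function on $\DD$ whose radial boundary values lie in $L^2$ need not be in $H^2$ (e.g.\ $e^{1/(1-z)}$ has bounded boundary values on $\partial\DD\setminus\{1\}$ but is not even in the Nevanlinna class). A clean fix is to observe that $(z-w)h=f-f(w)$ gives $(I-wS^*)h=S^*f$ after applying the backward shift $S^*$, so $h=(I-wS^*)^{-1}S^*f\in H^2$ since $\|wS^*\|<1$. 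With that correction the second proof is also fine, but your first proof already settles the matter without this detour.
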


Now we can prove the main result.
\begin{thm}\label{main}
Let $\Gt$ be a non-constant inner function vanishing at $0$. Suppose $\Gvp=\Gvp_1+\overline{\Gvp_2}$, where $\Gvp_1, \Gvp_2$ are in $K_{\Gt}$. Then $A^{\Gt}_{\Gvp}$ is normal if and only if either $$\Gvp_2-\Gvp_2(0)=\Ga(\Gvp_1-\Gvp_1(0))$$ or $$\Gvp_2-\Gvp_2(0)=\Ga\Gt(\ol{\Gvp_1}-\ol{\Gvp_1(0)}),$$
for some unimodular constant $\Ga$.
\end{thm}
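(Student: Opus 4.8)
The plan is to convert the normality of $A:=A^\Gt_\Gvp$ into an identity of reproducing kernels and then finish by a short linear‑algebra argument. Since $A^*A-AA^*$ is self‑adjoint, $A$ is normal if and only if $\|Au\|=\|A^*u\|$ for all $u$; because $K_\Gt\cap H^\infty$ is dense and $A$ is bounded, this is equivalent to the vanishing on $K_\Gt\cap H^\infty$ of the quadratic form on the right of Lemma~\ref{t1}, and hence — polarizing that Hermitian form — of its polarization on all pairs $u=k_a^\Gt$, $v=k_b^\Gt$. With the normalization $\Gvp_1(0)=\Gvp_2(0)=0$ already made, I set $\psi_j:=\Gt\,\overline{\Gvp_j}$; a short verification using $\Gvp_j\in K_\Gt$, $\Gvp_j(0)=0$, $\Gt(0)=0$ shows $\psi_j\in K_\Gt$, $\psi_j(0)=0$, $\Gt\,\overline{\psi_j}=\Gvp_j$, $\overline{\Gt}\Gvp_j=\overline{\psi_j}$, and, on $\TT$, $\Gvp_j\psi_j=\Gt|\Gvp_j|^2$, so $\Gvp_j\psi_j$ is the $H^1$ function with boundary values $\Gt|\Gvp_j|^2$.

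The implication ($\Leftarrow$) falls straight out of Lemma~\ref{t1}. If $\Gvp_2=\Ga\Gvp_1$ with $|\Ga|=1$, then $\overline{\Gt}\Gvp_2u=\Ga\,\overline{\Gt}\Gvp_1u$ and $\overline{\Gvp_2}u=\overline{\Ga}\,\overline{\Gvp_1}u$, so the two bracketed terms on the right of Lemma~\ref{t1} coincide, giving $\|Au\|=\|A^*u\|$ for every $u\in K_\Gt\cap H^\infty$ and hence normality. If $\Gvp_2=\Ga\Gt\,\overline{\Gvp_1}$ with $|\Ga|=1$, then $\overline{\Gt}\Gvp_2u=\Ga\,\overline{\Gvp_1}u$ and $\overline{\Gvp_2}u=\overline{\Ga}\,\overline{\Gt}\Gvp_1u$, so the right side of Lemma~\ref{t1} collapses to $\|\overline{\Gt}\Gvp_1u\|^2-\|\overline{\Gvp_1}u\|^2$, which vanishes because $|\Gt|=1$ a.e.; again $A$ is normal.

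For ($\Rightarrow$), assume $A$ normal. By Lemma~\ref{=1}, $|\Gvp_1|=|\Gvp_2|$ a.e.\ on $\TT$, hence $|\psi_1|=|\psi_2|$ on $\TT$ and, by comparing boundary values of $H^1$ functions, $\Gvp_1\psi_1=\Gvp_2\psi_2$ on $\DD$. From \eqref{Pt} and Theorem~\ref{P} one obtains $P^\perp(\overline{\Gt}\Gvp_j k_w^\Gt)=P^\perp(\overline{\psi_j}k_w)=\overline{\psi_j}k_w-\overline{\psi_j(w)}k_w$ and $P(\overline{\Gvp_j}k_w^\Gt)=\overline{\Gvp_j(w)}k_w-\overline{\Gt(w)}\psi_j k_w$; substituting these into the polarized form of Lemma~\ref{t1} at $u=k_a^\Gt$, $v=k_b^\Gt$ (and using $\langle P^\perp x,P^\perp y\rangle=\langle P^\perp x,y\rangle$) the $j$-th term works out to
\[
\Big(1-\overline{\Gt(a)}\Gt(b)\Big)\int|\Gvp_j|^2\,k_a\overline{k_b}\,\frac{dt}{2\pi}\;-\;\frac{\overline{\psi_j(a)}\psi_j(b)+\overline{\Gvp_j(a)}\Gvp_j(b)-\Gt(b)\,\overline{\Gvp_j(a)\psi_j(a)}-\overline{\Gt(a)}\,\Gvp_j(b)\psi_j(b)}{1-\overline{a}b}.
\]
Subtracting the $j=1$ and $j=2$ terms and cancelling everything permitted by $|\Gvp_1|=|\Gvp_2|$ and $\Gvp_1\psi_1=\Gvp_2\psi_2$, normality forces
\[
\overline{\Gvp_1(a)}\Gvp_1(b)+\overline{\psi_1(a)}\psi_1(b)=\overline{\Gvp_2(a)}\Gvp_2(b)+\overline{\psi_2(a)}\psi_2(b)\qquad(a,b\in\DD).
\]

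It remains to read off the conclusion. If $\Gvp_1\equiv0$, then $\psi_1\equiv0$, the right side forces $\Gvp_2\equiv\psi_2\equiv0$, and case (1) holds. If $\Gvp_1,\psi_1$ are linearly dependent, then $\psi_1=\lambda\Gvp_1$ with $|\lambda|=1$ (the modulus forced by $|\psi_1|=|\Gvp_1|$ on $\TT$), the left side is a rank‑one kernel, so $\Gvp_2,\psi_2$ are proportional, and comparing moduli on $\DD$ gives $\Gvp_2=\Ga\Gvp_1$ with $|\Ga|=1$ — case (1). Otherwise $\Gvp_1,\psi_1$ are linearly independent; a rank count makes $\Gvp_2,\psi_2$ independent as well, and since $\overline{\Gvp_1(a)}\Gvp_1(b)+\overline{\psi_1(a)}\psi_1(b)$ is realized as $\langle(\Gvp_1(b),\psi_1(b)),(\Gvp_1(a),\psi_1(a))\rangle_{\mathbb{C}^2}$ and also as the analogous inner product built from $(\Gvp_2,\psi_2)$, with both families spanning $\mathbb{C}^2$, there is a unitary $V=(v_{ij})\in U(2)$ with $(\Gvp_2,\psi_2)=V(\Gvp_1,\psi_1)$. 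Imposing $\psi_2=\Gt\,\overline{\Gvp_2}$ and using $\Gt\,\overline{\Gvp_1}=\psi_1$, $\Gt\,\overline{\psi_1}=\Gvp_1$ gives $v_{21}\Gvp_1+v_{22}\psi_1=\overline{v_{12}}\Gvp_1+\overline{v_{11}}\psi_1$, so $v_{21}=\overline{v_{12}}$ and $v_{22}=\overline{v_{11}}$; unitarity of this $V$ then forces $v_{11}v_{12}=0$. If $v_{12}=0$ then $\Gvp_2=v_{11}\Gvp_1$ (case (1)); if $v_{11}=0$ then $\Gvp_2=v_{12}\psi_1=v_{12}\Gt\,\overline{\Gvp_1}$ (case (2)); in either case $\Ga$ is unimodular. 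The main work is the bookkeeping in the kernel computation — pinning down which terms cancel thanks to $|\Gvp_1|=|\Gvp_2|$ and $\Gvp_1\psi_1=\Gvp_2\psi_2$; once the last displayed identity is reached, the $U(2)$ step is brief, and the linearly dependent subcase must simply be treated on its own.
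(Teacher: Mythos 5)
Your argument is correct, and it takes a genuinely different route through the necessity direction. The paper, after Lemma \ref{=1}, transfers the norm identity \eqref{u} from $K_\Gt$ to all of $H^\infty$ by means of the surjection $V_\Gt U\colon H^2\to K_\Gt$ together with the Hankel-type identity of Lemma \ref{Han}, then tests against the Cauchy kernels $k_w$ to obtain the single-variable identity \eqref{m1}, and finishes by multiplying by $|\Gvp_2(w)|^2$, factoring, and invoking the pointwise dichotomy ``$|\Gvp_1(w)|=|\Gvp_2(w)|$ or $|(\Gt\bar{\Gvp_1})(w)|=|\Gvp_2(w)|$'' to get a global identity. You bypass $V_\Gt U$ and Lemma \ref{Han} entirely: you polarize the Hermitian form coming from Lemma \ref{t1} and evaluate it at pairs of reproducing kernels $k_a^\Gt,k_b^\Gt$; your explicit computation of the $j$-th term checks out (using $P^\perp(\bar\Gt\Gvp_j k_w^\Gt)=P^\perp(\overline{\psi_j}k_w)$, Theorem \ref{P}, and $\overline{k_b(a)}=k_a(b)$), and the cancellations you invoke are justified by $|\Gvp_1|=|\Gvp_2|$ a.e.\ and by the $H^1$ identity $\Gvp_1\psi_1=\Gvp_2\psi_2$, which is exactly the paper's \eqref{m2}. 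This yields the two-variable kernel identity $\overline{\Gvp_1(a)}\Gvp_1(b)+\overline{\psi_1(a)}\psi_1(b)=\overline{\Gvp_2(a)}\Gvp_2(b)+\overline{\psi_2(a)}\psi_2(b)$, whose diagonal is the paper's \eqref{m1}; having the polarized version lets you replace the paper's ``factor and use analyticity'' endgame with a Gram-kernel argument producing a unitary $V\in U(2)$ with $(\Gvp_2,\psi_2)=V(\Gvp_1,\psi_1)$, and the symmetry constraint $\psi_2=\Gt\overline{\Gvp_2}$ then forces $v_{21}=\overline{v_{12}}$, $v_{22}=\overline{v_{11}}$, hence $v_{11}v_{12}=0$, which is precisely the desired alternative. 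What your route buys is the avoidance of both Lemma \ref{Han} and the slightly delicate step of upgrading a pointwise ``either/or'' to a global identity; what it costs is the bookkeeping in the kernel computation and the need to treat the degenerate cases ($\Gvp_1\equiv0$, or $\psi_1=\lambda\Gvp_1$) separately, which you do. A few spots are compressed but standard and easily completed: the complex polarization identity on the subspace $K_\Gt\cap H^\infty$, the verification that $\psi_j=\Gt\overline{\Gvp_j}\in K_\Gt$ (which uses $\Gvp_j(0)=0$ only to get $\psi_j(0)=0$, while membership in $K_\Gt$ needs just $\Gvp_j\in K_\Gt$ and, for $\overline{\Gt}\psi_j=\overline{\Gvp_j}\in\overline{zH^2}$, the normalization $\Gvp_j(0)=0$), and, in the rank-one case, fixing a point $a_0$ with $\Gvp_1(a_0)\neq0$ to extract the constant $\Ga$ and its unimodularity from the diagonal.
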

\begin{proof}
We may assume $\Gvp_1(0)=\Gvp_2(0)=0$. Sufficiency follows easily from Lemma \ref{t1}.  

Suppose $A^{\Gt}_{\Gvp}$ is normal. 
By \eqref{=} and Lemma \ref{=1}, we have
\beq\label{u}
||P(\bar{\Gvp_1}u)||^2+||P(\bar{\Gt}\Gvp_1 u)||^2=||P(\bar{\Gvp_2}u)||^2+||P(\bar{\Gt}\Gvp_2 u)||^2,
\eeq
for every $u\in K_\Gt\cap H^\infty$.
According to the discussion before Lemma \ref{Han}, if we write $u=V_\Gt Uf$, where $f\in H^\infty$, \eqref{u} is equivalent to 
$$
||P(\bar{\Gvp_1}V_\Gt Uf)||^2+||P(\bar{\Gt}\Gvp_1 V_\Gt Uf)||^2=||P(\bar{\Gvp_2}V_\Gt Uf)||^2+||P(\bar{\Gt}\Gvp_2 V_\Gt Uf)||^2,
$$
for every $f\in H^\infty$.
Using Lemma \ref{Han} and that $f\mapsto f^*$ is a bijection on $H^\infty$, we have
\beq\label{u2}
||P^\perp(\bar{\Gt}\Gvp_1 f)||^2+||P^\perp(\bar{\Gvp_1}f)||^2=||P^\perp(\bar{\Gt}\Gvp_2 f)||^2+||P^\perp(\bar{\Gvp_2}f)||^2,
\eeq
for every $f\in H^\infty$.
By Lemma \ref{=1}, 
$$
||\bar{\Gt}\Gvp_1 f||=||\bar{\Gt}\Gvp_2 f||,
$$
and 
$$
||\bar{\Gvp_1}f||=||\bar{\Gvp_2}f||.
$$
We see that \eqref{u2} implies
\beq\label{H}
||P(\bar{\Gt}\Gvp_1 f)||^2+||P(\bar{\Gvp_1}f)||^2=||P(\bar{\Gt}\Gvp_2 f)||^2+||P(\bar{\Gvp_2}f)||^2,
\eeq
for every $f\in H^\infty$. 

Take $f=k_w$ in \eqref{H}. By Theorem \ref{P}, we get
\beq\label{m1}
|\Gvp_1(w)|^2+|(\Gt\bar{\Gvp_1})(w)|^2=|\Gvp_2(w)|^2+|(\Gt\bar{\Gvp_2})(w)|^2,
\eeq
for every $w\in \DD$. Here $(\Gt\bar{\Gvp_1})(w)$ means $\la \Gt\bar{\Gvp_1}, k_w\ra$.

On the other hand, using Lemma \ref{=1}, we have
\begin{align}\label{m2}
{\Gvp_1(w)}(\Gt\bar{\Gvp_1})(w)&=\la \Gvp_1(\Gt\bar{\Gvp_1}), k_w\ra=\la \Gt |\Gvp_1|^2, k_w\ra=\la \Gt |\Gvp_2|^2, k_w\ra\\
\nnb&=\la \Gvp_2(\Gt\bar{\Gvp_2}), k_w\ra={\Gvp_2(w)}(\Gt\bar{\Gvp_2})(w).
\end{align}
for every $w\in \DD$.

Multiplying both sides of \eqref{m1} by $|\Gvp_2(w)|^2$ and using \eqref{m2}, we have
\begin{align*}
|\Gvp_1(w)\Gvp_2(w)|^2+|\Gvp_2(w)(\Gt\bar{\Gvp_1})(w)|^2&=|\Gvp_2(w)|^4+|\Gvp_2(w)(\Gt\bar{\Gvp_2})(w)|^2\\
&=|\Gvp_2(w)|^4+|\Gvp_1(w)(\Gt\bar{\Gvp_1})(w)|^2,
\end{align*}
which is equivalent to 
$$
(|\Gvp_1(w)|^2-|\Gvp_2(w)|^2)(|(\Gt\bar{\Gvp_1})(w)|^2-|\Gvp_2(w)|^2)=0.
$$
Thus for every $w\in \DD$, either
$$
|\Gvp_1(w)|=|\Gvp_2(w)|,
$$
or 
$$
|\Gvp_2(w)|=|(\Gt\bar{\Gvp_1})(w)|.
$$
Then it follows from the properties of analytic functions that
either
$$
\Gvp_1=\Ga\Gvp_2,
$$
or 
$$
\Gvp_2=\Ga\Gt\bar{\Gvp_1},
$$
for some unimodular constant $\Ga$.

\end{proof}

\begin{rem}
The characterization given in Theorem \ref{main} is equivalent to that in Theorem \ref{ct}. In fact, if we write $\Gvp=\Gvp_1+\bar{\Gvp_2}+\Gvp(0)$, where $\Gvp_1, \Gvp_2$ are in $K_\Gt\cap zH^2$,  it is shown in \cite{ct14}*{Section 5} that $A^\Gt_\Gvp\in\mcB_\Gt^\Ga$ if and only if $\Gt\bar{\Gvp_2}=\Ga\Gvp_1$. 
\end{rem}

\bibliography{references}
\end{document}